\newtheorem{theorem}{Theorem}
\newtheorem{proposition}[theorem]{Proposition}
\newtheorem{lemma}[theorem]{Lemma}
\newtheorem{definition}[theorem]{Definition}
\newtheorem{corollary}[theorem]{Corollary}
\definecolor{Red}{cmyk}{0,1,1,0}
\definecolor{Blue}{cmyk}{1,1,0,0}
\newcommand{\ba}{\begin{array}}
\newcommand{\ea}{\end{array}}
\newcommand{\be}{\begin{equation}}
\newcommand{\ee}{\end{equation}}
\newcommand{\ben}{\begin{enumerate}}
\newcommand{\een}{\end{enumerate}}
\newcommand{\FX}{F\langle X\rangle}
\newcommand{\R}{\mathbb{R}}
\newcommand{\C}{\mathbb{C}}
\begin{document}

\title{Multihomogeneous Normed Algebras and \\ Polynomial Identities}

\author{
Leandro Cioletti\thanks{
 \texttt{cioletti@mat.unb.br}}
\\
Jos\'e Ant\^onio Freitas\thanks{ \texttt{jfreitas@mat.unb.br}; Partially supported by grant from CNPq No. 478318/2010-3
}
\\
\\
Departamento de Matem\'atica,\\
Universidade de Bras\'\i lia,\\
70910-900 Bras\'\i lia, DF, Brazil
\\
\\
Dimas Jos\'e Gon\c{c}alves\thanks{\texttt{dimas@dm.ufscar.br}; Partially supported by grant from CNPq No. 478318/2010-3}
\\
\\
Departamento de Matem\'atica,\\
Universidade Federal de S\~ao Carlos,\\
13565-905 S\~ao Carlos, SP, Brazil
}

\maketitle

\begin{abstract}
In this paper we consider PI-algebras $A$ over $\R$ or $\C$. 
It is well known that in general such algebras are not normed algebras. In fact, 
there is a nilpontent commutative algebra which is not a normed algebra, 
see \cite{dales}. 
Here we address the question of whether 
it is possible to find a normed PI-algebra $B$ 
with the same polynomial identities as $A$, and moreover, 
whether there is some Banach PI-algebra with this property.
Our main theorem provides an affirmative answer for this question and moreover
we also show the existence of a Banach Algebra with the same polynomial identities as $A$.  
As a byproduct we prove that if 
$A$ is a normed PI-algebra and its completion is nil, then $A$ is nilpotent. 
By introducing the concept of multihomogeneous norm we obtain as an application
of our main results that if $\FX$ is multihomogeneus normed algebra and $A$ is a PI-algebra such that
the completion of the quotient space $\FX/Id(A)$ is nil, then $A$ is nilpotent. 
Both applications are extensions of the study initiated in \cite{grabiner}.
\end{abstract}

\smallskip

\noindent {\bf Key words:} PI-Algebras, Normed Algebras, Banach Algebras.

\smallskip

\noindent {\bf 2010 Mathematics Subject Classification:} 16R10, 16R40, 46H10.

\section{Introduction}

We begin this article by stating precisely some of our main results  
and then we proceed to introduce the concept of multihomogeneous norm. In order to be concise and objective, we will skip the precise definition of some of the basic concepts needed here, such as PI-algebra and Normed algebra. These, together with other additional background concepts, will appear in detail in Section 2. The proofs of the results stated here are found in the last section.

Let $F$ be the field $\R$ or $\C$ and $F\langle X \rangle$ the free 
non-unitary associative algebra, freely generated over $F$ by the infinite set
$X=\{x_1,x_2,\ldots\}$. All the algebras considered in this paper will be 
non-unitary, associative and over the field $F$. 
Thus for convenience we will only use the term \textit{algebra}. 
A good example to keep in mind is the algebra $F\langle X \rangle$. 

For a normed algebra $A$ we write $C(A)$ to denote its completion. 
If $A$ is a PI-algebra then we denote by $Id(A)$ the set of all polynomial identities of $A$.

The statement of our first result is: 

\begin{proposition}\label{identities-of-C(A)}
If $A$ is a normed PI-algebra, then $Id(A)=Id(C(A))$.
\end{proposition}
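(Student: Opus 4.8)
The plan is to prove the two inclusions $Id(C(A)) \subseteq Id(A)$ and $Id(A) \subseteq Id(C(A))$ separately, the second being the substantive one. The guiding principle is that evaluation of a fixed polynomial is a continuous operation on a normed algebra, so identities propagate to the completion by a density argument.

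For the inclusion $Id(C(A)) \subseteq Id(A)$ I would simply observe that $A$ sits inside $C(A)$ as a dense subalgebra. Hence if $f \in \FX$ vanishes under every substitution of elements of $C(A)$, it vanishes a fortiori under substitutions drawn from $A$, so $f \in Id(A)$. No analysis is needed here.

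For the reverse inclusion I would first recall that the algebra operations of $A$ — addition, scalar multiplication, and the product — are (uniformly) continuous on bounded sets, the product because the norm is submultiplicative, so that multiplication is jointly continuous. These operations therefore extend uniquely to continuous operations on the completion, making $C(A)$ a normed algebra containing $A$ densely. I would then fix $f = f(x_1,\ldots,x_n) \in Id(A)$ and arbitrary $b_1,\ldots,b_n \in C(A)$, choose sequences $a_i^{(k)} \in A$ with $a_i^{(k)} \to b_i$ as $k \to \infty$, and argue that the evaluation map $(y_1,\ldots,y_n) \mapsto f(y_1,\ldots,y_n)$ is continuous, since $f$ is a finite linear combination of monomials in the variables and each monomial is continuous by repeated use of submultiplicativity. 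Consequently $f(b_1,\ldots,b_n) = \lim_{k\to\infty} f(a_1^{(k)},\ldots,a_n^{(k)}) = 0$, because every term of the sequence vanishes by $f \in Id(A)$. As $b_1,\ldots,b_n$ were arbitrary, $f \in Id(C(A))$.

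The only genuinely technical point — hence the part I would write most carefully — is the continuity of a monomial under simultaneous approximation of all its arguments. This reduces to the standard fact that $y_i^{(k)} \to y_i$ implies $y_1^{(k)} \cdots y_m^{(k)} \to y_1 \cdots y_m$, which follows from a telescoping estimate combined with the boundedness of convergent sequences and submultiplicativity of the norm. Everything else is formal, so I expect no serious obstacle beyond this bookkeeping.
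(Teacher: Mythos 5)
Your proof is correct and takes essentially the same route as the paper: the trivial inclusion follows from $A \subseteq C(A)$, and the substantive one from the fact that polynomial evaluation commutes with limits on a dense subalgebra. The paper merely phrases this via the concrete Cauchy-sequence model of $C(A)$, where $f([(a_{1n})_n],\ldots,[(a_{mn})_n])=[(f(a_{1n},\ldots,a_{mn}))_n]$ holds by the componentwise definition of the operations, while you argue through the continuity of the evaluation map; these are the same argument in different clothing.
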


In other words this proposition tell us that every normed PI-algebra $A$ has the same polynomial identities that some Banach PI-algebra. Since not all PI-algebras are normed PI-algebras, see for example \cite{dales}, a natural question to ask is: given a PI-algebra $A$, is there some
Banach PI-algebra $B$ with the same polynomial identities of $A$ ? 
As we said before we give an affirmative answer for this question and we also show how to construct such algebra $B$.
To explain the construction we introduce some definitions. 

Let $f=f(x_1, \dots, x_n) \in \FX$ be a polynomial, which will be written as
\[
f = \displaystyle\sum_{d_1\ge 0, \dots, d_n \ge 0}f^{(d_1, \dots, d_n)}\ ,
\]
where $f^{(d_1, \dots, d_n)}=f^{(d_1, \dots, d_n)}(x_1, \dots, x_n)$ is the multihomogeneous component of $f$ with multidegree $(d_1,\ldots,d_n)$.

\begin{definition}
A norm $|| \cdot ||$ in $\FX$ is called multihomogenous if 
\[
||f^{(d_1,\ldots,d_n)}|| \leq ||f||
\]
for all $f=f(x_1,\ldots,x_n) \in \FX$ and all $d=(d_1,\ldots,d_n)$. 
If $\FX$ is a normed algebra with respect to a 
multihomegeneous norm, then we say that $\FX$ is a MN-algebra.
\end{definition}

An example of MN-algebras can be obtained as follows. Take $f=\sum_m \alpha_m m$,
where $\alpha_m \in F$ and $m$ is a monomial, then $\FX$ with the norm 
\[||f||=\sum_m |\alpha_m|.\]
is a MN-algebra.

In the sequel, we prove that if $\FX$ is a MN-algebra and if $A$ is a PI-algebra, 
then $Id(A)$ is a closed ideal of $\FX$. Thus the multihomogeneous norm in $\FX$ induces a 
norm in the quotient algebra $\FX / Id(A)$ by
\[||f+Id(A)||=\mbox{inf} \{||f+g|| \, : \, g\in Id(A)\},\]
where $f\in \FX$. We remark that the quotient $\FX / Id(A)$ is a normed algebra 
with this norm and using this fact we obtain our second main result:

\begin{theorem}\label{identities-in-banach}
Let $\FX$ be a MN-algebra. If $A$ is a PI-algebra, then
\[
Id(A)=Id\left(C\left(\frac{\FX}{Id(A)}\right)\right).
\]
\end{theorem}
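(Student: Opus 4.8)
The plan is to identify the quotient $B := \FX/Id(A)$ as the relatively free algebra of the variety generated by $A$, to establish the algebraic identity $Id(B)=Id(A)$, and then to apply Proposition~\ref{identities-of-C(A)} to $B$. By the remarks preceding the theorem, since $\FX$ is a MN-algebra and $A$ is a PI-algebra, $Id(A)$ is a closed ideal of $\FX$, and the induced quotient norm $\|f+Id(A)\|=\inf\{\|f+g\|:g\in Id(A)\}$ makes $B$ an honest (submultiplicative) normed algebra. Granting this, the entire statement collapses to the chain $Id(A)=Id(B)=Id(C(B))$, in which the second equality is precisely Proposition~\ref{identities-of-C(A)} applied to the normed algebra $B$.

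First I would prove the purely algebraic identity $Id(B)=Id(A)$, which is where the $T$-ideal structure of $Id(A)$ enters. A polynomial $p=p(x_1,\dots,x_n)$ lies in $Id(B)$ exactly when $p(g_1+Id(A),\dots,g_n+Id(A))=p(g_1,\dots,g_n)+Id(A)$ vanishes in $B$ for all $g_1,\dots,g_n\in\FX$, that is, exactly when $p(g_1,\dots,g_n)\in Id(A)$ for all such $g_i$. If $p\in Id(A)$, then the invariance of $Id(A)$ under every endomorphism of $\FX$ (the well-known fact that $Id(A)$ is a $T$-ideal) yields $p(g_1,\dots,g_n)\in Id(A)$, whence $p\in Id(B)$. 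Conversely, specializing $g_i=x_i$ shows that any $p\in Id(B)$ already belongs to $Id(A)$. Therefore $Id(B)=Id(A)$; in particular $Id(B)\neq 0$, so $B$ is genuinely a PI-algebra and the hypotheses of Proposition~\ref{identities-of-C(A)} are satisfied.

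Finally I would assemble the pieces. Applying Proposition~\ref{identities-of-C(A)} to the normed PI-algebra $B$ gives $Id(B)=Id(C(B))$, and since $B=\FX/Id(A)$ this reads
\[
Id\!\left(C\!\left(\frac{\FX}{Id(A)}\right)\right)=Id(B)=Id(A),
\]
which is the assertion. I expect the real difficulty to lie not in this assembly but in the inputs quoted from earlier in the paper --- chiefly that $Id(A)$ is norm-closed in the MN-algebra $\FX$, so that $B$ carries a genuine norm rather than a seminorm, and that the resulting quotient norm stays submultiplicative. Here the multihomogeneity of the norm is the crucial hypothesis: it lets one control each multihomogeneous component of a Cauchy sequence separately, which meshes with the fact that $Id(A)$ is itself a multihomogeneous (hence componentwise-described) subspace over the infinite field $F$, and this is what forces $Id(A)$ to be closed. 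Once these facts are in hand, the theorem follows as a direct corollary of the Proposition.
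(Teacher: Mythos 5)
Your proposal is correct and follows essentially the same route as the paper: the paper likewise reduces the theorem to the classical identity $Id(A)=Id(\FX/Id(A))$ (which it cites rather than proves) combined with Proposition~\ref{identities-of-C(A)} applied to the quotient, which is a normed PI-algebra thanks to Proposition~\ref{prop-multigraded}. Your only addition is to spell out the $T$-ideal argument for $Id(B)=Id(A)$, which the paper leaves to a reference.
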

In particular, a PI-algebra has the same polinomial identities that some Banach PI-algebra.
As an application, we obtain similar results as in the Grabiner's paper. 
In \cite{grabiner} the author proves the following:

\begin{theorem} \label{banachnilnilpotente}
Let $A$ be a Banach algebra. If $A$ is nil then $A$ is nilpotent.
\end{theorem}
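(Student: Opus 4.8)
The plan is to combine a Baire category argument with the Nagata--Higman theorem. First I would introduce, for each integer $n \geq 1$, the set $A_n = \{a \in A : a^n = 0\}$. Since $A$ is a Banach algebra its multiplication is jointly continuous, so the map $a \mapsto a^n$ is continuous and each $A_n$ is the preimage of $\{0\}$, hence closed. The hypothesis that $A$ is nil means precisely that $A = \bigcup_{n\geq 1} A_n$. As $A$ is a complete metric space, the Baire category theorem guarantees that some $A_n$ has nonempty interior; that is, there exist $a_0 \in A$ and $r > 0$ such that the open ball $B(a_0, r)$ is contained in $A_n$.

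The next step is to upgrade this local information into a uniform bound on the index of nilpotency. Fix an arbitrary $z \in A$ with $z \neq 0$ and consider the map $t \mapsto (a_0 + t z)^n$ for a scalar $t \in F$. Expanding the product, this is a polynomial in $t$ of degree $n$ with coefficients in $A$, and its leading coefficient, that of $t^n$, is exactly $z^n$. For every $t$ with $|t| < r/\|z\|$ the element $a_0 + t z$ lies in $B(a_0,r) \subseteq A_n$, so the polynomial vanishes at infinitely many values of $t$. Evaluating at $n+1$ distinct such scalars and inverting the resulting Vandermonde system forces every coefficient to vanish; in particular $z^n = 0$. Hence $a^n = 0$ for all $a \in A$, so $A$ satisfies the identity $x^n = 0$ with a single uniform exponent $n$.

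Finally I would invoke the Nagata--Higman theorem: an associative algebra over a field of characteristic zero in which $x^n = 0$ holds identically is nilpotent (indeed $A^{2^n - 1} = 0$). Since $F$ is $\R$ or $\C$, this applies and yields that $A$ is nilpotent, completing the proof.

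I expect the only genuinely deep ingredient to be the Nagata--Higman theorem invoked in the last step; the reduction to bounded index via Baire category together with the polynomial/Vandermonde argument is elementary. A minor point to handle carefully is that $A$ is non-unital, but this causes no difficulty, since the elements $a_0$, $z$ and all their products remain in $A$ throughout.
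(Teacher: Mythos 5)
Your proof is correct. Note that the paper does not prove this statement at all: it is quoted from Grabiner's 1969 note and used as a black box. The argument you give --- Baire category applied to the closed sets $\{a\in A: a^n=0\}$ to extract a ball on which $x^n$ vanishes, the Vandermonde trick to isolate the leading coefficient $z^n$ of $(a_0+tz)^n$ and deduce the uniform identity $x^n=0$, and then the Nagata--Higman theorem to pass from bounded nil index to nilpotency --- is precisely Grabiner's original proof, and every step checks out (including the non-unital point, since Nagata--Higman is stated for algebras without unit).
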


In the above theorem, the algebra $A$ is required to be a Banach algebra. 
Here we investigate when the nilpotency of an algebra $A$ 
can be obtained by hypothesis imposed on $C(A)$. In this direction 
our first result is  
\begin{corollary} \label{CAnilthenAnilpot}
Let $A$ be a normed PI-algebra. If $C(A)$ is nil, then $A$ is nilpotent.
\end{corollary}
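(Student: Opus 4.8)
The plan is to combine Proposition~\ref{identities-of-C(A)} with Grabiner's theorem (Theorem~\ref{banachnilnilpotente}), translating the structural notion of nilpotency into the language of polynomial identities. The key observation is that an algebra $B$ is nilpotent, say $B^n = 0$, precisely when the monomial $x_1 x_2 \cdots x_n$ lies in $Id(B)$; this reformulation is what will allow us to transfer nilpotency between $A$ and its completion through the equality of their ideals of identities.

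First I would note that $C(A)$, being the completion of a normed algebra, is a Banach algebra, and that by hypothesis it is nil. Grabiner's theorem then applies directly and yields that $C(A)$ is nilpotent: there exists some $n$ with $C(A)^n = 0$. Equivalently, the product of any $n$ elements of $C(A)$ vanishes, which is exactly the statement that the polynomial $x_1 x_2 \cdots x_n$ is a polynomial identity of $C(A)$, that is, $x_1 x_2 \cdots x_n \in Id(C(A))$.

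Next, since $A$ is a normed PI-algebra, Proposition~\ref{identities-of-C(A)} gives $Id(A) = Id(C(A))$. Consequently $x_1 x_2 \cdots x_n \in Id(A)$, which means that the product of any $n$ elements of $A$ is zero, i.e. $A^n = 0$. Hence $A$ is nilpotent, as claimed.

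The argument is short because the heavy lifting is carried by the two quoted results; the only points requiring care are the reformulation of nilpotency as membership of a monomial in the ideal of identities, and the verification that $C(A)$ meets the hypotheses of Grabiner's theorem (it is Banach as a completion, and nil by assumption). Rather than a genuine obstacle, the main conceptual step is recognizing that equality of polynomial identities is strong enough to carry the single monomial identity $x_1 x_2 \cdots x_n$ from $C(A)$ back to $A$, thereby converting an analytic nilpotency statement about the completion into a purely algebraic one about $A$.
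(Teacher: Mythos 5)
Your proof is correct and follows exactly the paper's own argument: apply Grabiner's theorem to the Banach algebra $C(A)$ to get nilpotency, encode this as the identity $x_1x_2\cdots x_n$, and transfer it back to $A$ via Proposition~\ref{identities-of-C(A)}. No differences worth noting.
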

Our second result relates the nilpotency of $A$ to the completion of certain quotient space related to
the polynomial identities of $A$. To be more 
precise we prove the following:   
\begin{corollary} \label{quotnilthenAnilp}
Let $\FX$ be a MN-algebra and let $A$ be a PI-algebra. If 
\[
C\left(\frac{\FX}{Id(A)}\right)
\]
is nil, then $A$ is nilpotent.
\end{corollary}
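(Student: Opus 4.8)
The plan is to reduce the statement to Corollary~\ref{CAnilthenAnilpot} by applying that result to the relatively free algebra $B := \FX/Id(A)$, and then to transfer the nilpotency of $B$ back to $A$ through the nilpotency identity that it forces.

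First I would record that, since $\FX$ is a MN-algebra, $Id(A)$ is a closed ideal, so the quotient $B = \FX/Id(A)$ inherits the induced quotient norm and is a normed algebra. Moreover $B$ is a PI-algebra, because it satisfies every polynomial identity of $A$; in fact $Id(B) = Id(A)$, the relatively free algebra of a variety having exactly the identities that define the variety. Thus $B$ is a normed PI-algebra, and its completion is precisely $C(B) = C(\FX/Id(A))$.

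Next, the hypothesis is that $C(B) = C(\FX/Id(A))$ is nil. Applying Corollary~\ref{CAnilthenAnilpot} to the normed PI-algebra $B$ then gives that $B = \FX/Id(A)$ is nilpotent; say $B^n = 0$ for some $n \ge 1$. (Should one prefer not to invoke the corollary directly, the same conclusion follows by the argument behind it: $C(B)$ is a nil Banach algebra, hence nilpotent by Theorem~\ref{banachnilnilpotente}, and since $B$ embeds densely in $C(B)$, the algebra $B$ is itself nilpotent.)

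Finally I would pass from $B$ to $A$. The relation $B^n = 0$ means exactly that $(\FX)^n \subseteq Id(A)$; in particular the monomial $x_1 x_2 \cdots x_n$ lies in $Id(A)$, so $A$ satisfies the identity $x_1 x_2 \cdots x_n \equiv 0$. This is precisely the assertion $A^n = 0$, i.e. $A$ is nilpotent. The only point needing care is the routine bookkeeping that nilpotency of the relatively free algebra $\FX/Id(A)$ is equivalent to $A$ satisfying such a nilpotency identity; beyond that, the argument is a direct chain of the results established earlier in the paper, so I do not anticipate a substantial obstacle.
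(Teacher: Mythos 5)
Your proof is correct and follows essentially the same route as the paper: both reduce to Grabiner's theorem applied to the completion of the normed PI-algebra $B=\FX/Id(A)$ (whose norm exists because $Id(A)$ is closed by Proposition~\ref{prop-multigraded}) and then transfer the resulting nilpotency identity back to $A$ via $Id(A)=Id(B)=Id(C(B))$. The only cosmetic difference is that you factor the argument through Corollary~\ref{CAnilthenAnilpot} applied to $B$, while the paper invokes Theorem~\ref{identities-in-banach} directly; these are interchangeable given the earlier results, as your parenthetical remark already observes.
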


\section{Banach and PI-Algebras}

An algebra $A$ is said to be \textit{normed} if it satisfies the followings properties:
\begin{itemize}
\item[a)] $A$ has a norm $\| \cdot \|$;

\item[b)] $\|ab\| \le \|a\|\|b\|$ for all $a, b \in A$.
\end{itemize}

A normed algebra $A$ is called \textit{Banach algebra} if $A$ is a complete normed space.    
It's well known that every normed algebra $A$ is contained in some Banach algebra $C(A)$ such that $A$ is dense in $C(A)$. 
This algebra is the \textit{completion}
of $A$. Here we recall its construction: we first define the relation $\sim$ in the set of all Cauchy sequences of $A$ by
\[(a_n)_n \sim (b_n)_n \Longleftrightarrow \lim_{n \rightarrow \infty} \|a_n-b_n\|=0.\]
Denote by $C(A)$ the set of all equivalence classes. If $(a_n)_n$ is a Cauchy sequence in $A$, we denote by 
$[(a_n)_n]$ its equivalence class. The algebraic operations in $C(A)$ are defined as usual:
$[(a_n)_n]+[(b_n)_n]=[(a_n+b_n)_n]$, 
for any $\lambda \in F$, we put $\lambda [(a_n)_n]=[(\lambda a_n)_n]$ and
$[(a_n)_n][(b_n)_n]=[(a_nb_n)_n]$. Endowed with these operations and with the norm
\[\|[(a_n)_n]\|=\lim_{n \rightarrow \infty} \|a_n\|\]
we have that $C(A)$ is a Banach algebra. Note that an element $a\in A$ is identified with the class 
$[(a)_n]\in C(A)$ of the constant sequence equal to $a$. For more details see \cite{naimark,Kaplansky-Metric}.
 
Let $F\langle X \rangle$ be the free non-unitary associative algebra, freely generated over $F$ by the infinite set
$X=\{x_1,x_2,\ldots\}$. The elements of $\FX$ are called \textit{polynomials} and a polynomial of the kind
$x_{i_1}x_{i_1}\ldots x_{i_n}$ is called \textit{monomial}.

A polynomial $f(x_1, \dots, x_m) \in \FX$ is called a \textit{polynomial identity} for an algebra $A$
if $f(a_1, \dots, a_m) = 0$, for all $a_1,\ldots,a_m \in A$. We denote by $Id(A)$ the set of all polynomial identities of $A$. If 
$Id(A) \neq \{0\}$ then we say that $A$ is a \textit{PI-algebra}.
The set $Id(A)$ is an ideal in $\FX$ and has the property
$f(g_1,\ldots,g_m) \in Id(A)$
for all $f(x_1,\ldots,x_m) \in Id(A)$ and $g_1,\ldots,g_m \in F\langle X \rangle$. Thus we say that
$Id(A)$ is a \textit{T-ideal}.
For details, see \cite{drenskybook,gzbook}.

Let $\FX^{(d_1,\ldots,d_m)}$ be the vector subspace of $\FX$ spanned by all monomials 
$u=x_{j_1}\ldots x_{j_t}$, where the variable $x_i$ appears $d_i$ times in $u$ for all $i=1,\ldots, m$.
If $f(x_1,\ldots, x_m) \in \FX^{(d_1,\ldots,d_m)}$ then we say that $f$ is \textit{multihomegenous} of
\textit{multidegree} $(d_1,\ldots,d_m)$.
Note that if $f=f(x_1, \dots, x_m) \in \FX$, we can always write
\[
f = \displaystyle\sum_{d_1\ge 0, \dots, d_m \ge 0}f^{(d_1, \dots, d_m)}
\]
where $f^{(d_1, \dots, d_m)} \in \FX^{(d_1, \dots, d_m)}$.
The polynomials $f^{(d_1,\ldots,d_m)}$ are called 
the \textit{multihomogenous components} of $f$.

Now we recall an important result that will be used in the next section. 

\begin{theorem}\label{identpolhomopol}
If $f=f(x_1, \dots, x_m)$ is a 
polynomial identity for an algebra $A$, 
then every multihomogeneous component of $f$ is a polynomial identity for $A$.
\end{theorem}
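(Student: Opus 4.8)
The plan is to peel off the homogeneity one variable at a time, using that $F \in \{\R, \C\}$ is infinite together with a Vandermonde argument. First I would fix the variable $x_1$ and regroup $f$ according to its degree in $x_1$, writing $f = \sum_{d=0}^{N} f_d$ where $f_d = f_d(x_1, \ldots, x_m)$ collects exactly those monomials in which $x_1$ occurs $d$ times (so $f_d$ is homogeneous of degree $d$ in $x_1$, but not yet homogeneous in the remaining variables). The crucial observation is that for any scalar $\lambda \in F$ and any $a_1, \ldots, a_m \in A$, substituting $\lambda a_1$ in place of $x_1$ pulls a factor $\lambda^d$ out of $f_d$, since each monomial of $f_d$ contains exactly $d$ copies of $x_1$ and $A$ is an $F$-algebra. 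Hence $f(\lambda a_1, a_2, \ldots, a_m) = \sum_{d=0}^{N} \lambda^{d}\, f_d(a_1, \ldots, a_m)$.

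Next I would exploit that $f$ is an identity, so the left-hand side vanishes for every $\lambda$ and every choice of the $a_i$. Fixing the $a_i$ and letting $\lambda$ range over $N+1$ distinct scalars $\lambda_0, \ldots, \lambda_N$ of $F$ (available because $F$ is infinite) yields the linear system $\sum_{d=0}^{N} \lambda_j^{d}\, f_d(a_1, \ldots, a_m) = 0$ for $j = 0, \ldots, N$. The coefficient matrix is the Vandermonde matrix in the distinct nodes $\lambda_j$, hence invertible, forcing $f_d(a_1, \ldots, a_m) = 0$ for each $d$. As the $a_i$ were arbitrary, each $f_d$ is itself a polynomial identity for $A$.

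Finally I would iterate. Each $f_d$ is an identity that is homogeneous in $x_1$, so applying the same argument with $x_2$ in the role of $x_1$ splits $f_d$ into pieces that are again identities and are homogeneous in both $x_1$ and $x_2$; repeating for $x_3, \ldots, x_m$ (a straightforward induction on the number of variables) shows that every multihomogeneous component $f^{(d_1, \ldots, d_m)}$ is a polynomial identity for $A$. I do not expect a serious obstacle: the only points needing care are the scaling identity that justifies factoring out $\lambda^{d}$, and the appeal to the infiniteness of $F$ to supply enough distinct nodes for the Vandermonde step, both of which are immediate here. The induction is pure bookkeeping, as each stage preserves the homogeneity already achieved in the earlier variables.
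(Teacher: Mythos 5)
Your proof is correct: the variable-by-variable Vandermonde argument, using the infiniteness of $F$ to supply $N+1$ distinct scalars, is exactly the standard proof of this fact. The paper itself gives no argument but simply cites \cite{gzbook}, Theorem 1.3.2, and the proof there is essentially the one you wrote out.
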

\begin{proof}
See \cite{gzbook}, Theorem 1.3.2. 
\end{proof}

We call the reader's attention to the fact that the above result holds for every infinite field $F$, but it is not always true for finite fields.

\section{Proofs of the Main Results}

In this section we give the proofs of the main results of the paper.

\begin{proof}(Proposition \ref{identities-of-C(A)})
Since $A \subseteq C(A)$, it follows that $Id(C(A)) \subseteq Id(A)$. 
Let $f(x_1,\ldots,x_m) \in Id(A)$ and  $a_1,\ldots,a_m \in C(A)$. 
By the construction of $C(A)$ we have  
 \[f(a_1,\ldots,a_m)=f([(a_{1n})_n],\ldots,[(a_{mn})_n])=[(f(a_{1n},\ldots,a_{mn}))_n]=[(0)_n].\]
The last identity implies that $f\in Id(C(A))$. Therefore $Id(A) \subseteq Id(C(A))$.
\end{proof}

Let $g(x_1,\ldots,x_t)$ a polynomial and let $g^{(d_1,\ldots,d_t)}$ its multihomogeneous component
of multidegree $(d_1,\ldots,d_t)$. If $m < t$ and $d_{m+1}=d_{m+2}=\ldots =d_{t}=0$, then we write
\[g^{(d_1,\ldots,d_m,\ldots,d_t)}=g^{(d_1,\ldots,d_m)}.\]
If $t < m$, then we write
\[g^{(d_1,\ldots,d_t)}=g^{(d_1,\ldots,d_t,0,\ldots,0)},\]
where the number of zeros is $m-t$. With this convention we have the following lemma:

\begin{lemma}\label{multihomogenous-convergence}
Let $\FX$ be a MN-algebra and let $f=f(x_1,\ldots,x_m)$ be a polynomial.
If $(f_n)_n$ is a sequence in $\FX$ such that $f_n \to f$, then
\[f_n^{(d_1,\ldots,d_m)} \to f^{(d_1,\ldots,d_m)}\]
for all multidegree $d=(d_1,\ldots,d_m)$.
\end{lemma}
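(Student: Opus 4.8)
The plan is to exploit the linearity of the operation that extracts a multihomogeneous component, together with the defining inequality of a multihomogeneous norm. First I would observe that, for a fixed multidegree $d=(d_1,\ldots,d_m)$, the assignment $g \mapsto g^{(d_1,\ldots,d_m)}$ is an $F$-linear map on $\FX$: writing any polynomial as the sum of its multihomogeneous components, this assignment simply picks out the summand of multidegree $d$, and selecting a fixed coordinate commutes with addition and scalar multiplication. Here I would invoke the stated convention on multidegrees to make sense of $g^{(d_1,\ldots,d_m)}$ even when $g$ involves variables beyond $x_1,\ldots,x_m$ (those extra variables being assigned degree $0$), so that the projection is genuinely defined on all of $\FX$.

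With this in hand, the key identity is that, by linearity,
\[
f_n^{(d_1,\ldots,d_m)} - f^{(d_1,\ldots,d_m)} = (f_n - f)^{(d_1,\ldots,d_m)}.
\]
I would then apply the defining property of the multihomogeneous norm to the polynomial $f_n - f$, obtaining
\[
\left\| (f_n - f)^{(d_1,\ldots,d_m)} \right\| \le \| f_n - f \|.
\]
Combining the two displays gives $\| f_n^{(d_1,\ldots,d_m)} - f^{(d_1,\ldots,d_m)} \| \le \| f_n - f \|$, and since $f_n \to f$ means $\| f_n - f \| \to 0$, the right-hand side tends to $0$. Hence $f_n^{(d_1,\ldots,d_m)} \to f^{(d_1,\ldots,d_m)}$, and the same bound holds simultaneously for every multidegree $d$.

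There is no real analytic obstacle here; the entire content is the single inequality built into the definition of a multihomogeneous norm, which is precisely what converts convergence of $(f_n)_n$ into convergence of each of its components. The only point deserving a moment's care is the bookkeeping around the convention for multidegrees of differing lengths, so as to confirm that $g \mapsto g^{(d)}$ is a well-defined linear projection on the whole of $\FX$ rather than merely on polynomials in $x_1,\ldots,x_m$; once that is settled, the estimate is immediate.
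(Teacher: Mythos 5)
Your proof is correct and follows essentially the same route as the paper: the paper's proof consists precisely of the inequality $\| f_n^{(d_1,\ldots,d_m)} - f^{(d_1,\ldots,d_m)}\| \le \|f_n - f\|$ obtained from the multihomogeneous norm, with the linearity of the component projection left implicit. Your explicit justification of that linearity (and of the multidegree convention) is a harmless elaboration, not a different argument.
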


\begin{proof}
 Once $\| \cdot \|$ is a multihomogeneous norm we have the following inequality
\[
\| f_n^{(d_1, \ldots, d_m)} - f^{(d_1, \ldots, d_m)}\| \le \|f_n - f\|.
\]
Since $f_n \to f$ it follows that $f_n^{(d_1,\ldots,d_m)} \to f^{(d_1,\ldots,d_m)}$.
\end{proof}

\begin{proposition}\label{prop-multigraded}
Let $A$ be a PI-algebra. If $\FX$ is a MN-algebra, then $Id(A)$ is a closed ideal.
\end{proposition}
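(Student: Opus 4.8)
The plan is to use that $Id(A)$ is already known to be an ideal of $\FX$, and in particular a vector subspace, so the only thing left to establish is that it is closed in the multihomogeneous norm. I would therefore start from an arbitrary sequence $(f_n)_n$ in $Id(A)$ with $f_n \to f$ for some $f \in \FX$, and aim to show that $f \in Id(A)$.

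Since $f$ is a polynomial it involves only finitely many variables, say $x_1, \ldots, x_m$, and decomposes as a \emph{finite} sum $f = \sum_{d} f^{(d)}$ of its multihomogeneous components, the sum ranging over a finite set $S$ of multidegrees $d = (d_1, \ldots, d_m)$. Fix such a $d$. By Lemma \ref{multihomogenous-convergence} the convergence $f_n \to f$ passes to components, giving $f_n^{(d)} \to f^{(d)}$; and by Theorem \ref{identpolhomopol} each $f_n^{(d)}$ lies in $Id(A)$, being a multihomogeneous component of the identity $f_n$.

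The key step is to upgrade, for each fixed $d$, the a priori conclusion that $f^{(d)}$ lies in the closure of $Id(A)$ to the genuine membership $f^{(d)} \in Id(A)$. Here I would exploit that the subspace $\FX^{(d_1,\ldots,d_m)}$ of multihomogeneous polynomials of multidegree $d$ is finite-dimensional, since it is spanned by the finitely many monomials of that multidegree. Consequently $Id(A) \cap \FX^{(d_1,\ldots,d_m)}$ is a subspace of a finite-dimensional normed space, and every such subspace is complete, hence closed. Since each $f_n^{(d)}$ belongs to $Id(A) \cap \FX^{(d_1,\ldots,d_m)}$ and converges to $f^{(d)} \in \FX^{(d_1,\ldots,d_m)}$, the limit $f^{(d)}$ lies in $Id(A) \cap \FX^{(d_1,\ldots,d_m)} \subseteq Id(A)$.

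Finally, summing over the finite index set $S$ and using that $Id(A)$ is closed under addition yields $f = \sum_{d \in S} f^{(d)} \in Id(A)$, which proves closedness. I expect the finite-dimensionality reduction in the third paragraph to be the real heart of the argument: it is precisely what converts the purely topological closure statement into actual membership in $Id(A)$, and it is the step where Theorem \ref{identpolhomopol} and the multihomogeneous structure are genuinely used. A minor technical point to handle is the convention for components when some $f_n$ happens to involve more variables than $f$, but this is absorbed by the padding-with-zeros convention stated just before Lemma \ref{multihomogenous-convergence}.
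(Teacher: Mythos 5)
Your proof is correct and follows essentially the same route as the paper's: reduce to a fixed multidegree via Lemma \ref{multihomogenous-convergence}, put the components $f_n^{(d)}$ into $Id(A)\cap \FX^{(d_1,\ldots,d_m)}$ using Theorem \ref{identpolhomopol}, and conclude by the closedness of that finite-dimensional subspace. The paper's argument is exactly this, so there is nothing to add.
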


\begin{proof} 
Let $(f_n)_n \in Id(A)$ be a sequence of polynomials such that $f_n \to f$. We want to prove that $f\in Id(A)$.
Write
\[f=f(x_1,\ldots,x_m)= \sum_{(d_1, \dots, d_m)}f^{(d_1, \dots, d_m)}.\]
By the Lemma \ref{multihomogenous-convergence}, we have that  
$f_n^{(d_1, \dots, d_m)} \to f^{(d_1, \dots, d_m)}$ for all multidegree $(d_1, \dots, d_m)$. Note that
$f_n^{(d_1, \dots, d_m)}\in \FX^{(d_1, \dots, d_m)}\cap Id(A)$ by the Theorem \ref{identpolhomopol}.

Since $\FX^{(d_1, \dots, d_m)}$ is a finite-dimensional vector space
follows that 
\[\FX^{(d_1, \dots, d_m)} \cap Id(A)\] has also finite dimension. 
Since every finite-dimensional space is closed in the norm topology, 
we have that $\FX^{(d_1, \dots, d_m)} \cap Id(A)$ is closed.
Thus  
\[f^{(d_1, \dots, d_m)} \in \FX^{(d_1, \dots, d_m)} \cap Id(A)\]
and therefore $f \in Id(A)$.
\end{proof}

If $A$ is a PI-algebra and $\FX$ is MN-algebra, then by the above proposition we can define a norm in 
the quotient algebra $\FX/Id(A)$:
\[\|f+Id(A)\|=\mbox{inf} \{\|f+g\| \, : \, g\in Id(A)\},\]
where $f\in \FX$. With this norm we have that $\FX/Id(A)$ is a normed algebra.
So we can see that the Theorem \ref{identities-in-banach} 
describes the polynomial identities of the completion of this quotient algebra.
Now we proceed to its proof.

\begin{proof}(Theorem \ref{identities-in-banach}) 
By a classical result in PI-Algebra we have  
\[
Id(A)=Id\left(\frac{\FX}{Id(A)}\right),
\]
see \cite{gzbook}. So the proof of the theorem follows immediately from Proposition \ref{identities-of-C(A)}.
\end{proof}

\begin{proof}(Corollary \ref{CAnilthenAnilpot})
If $C(A)$ is nil, then by Theorem \ref{banachnilnilpotente}, we have that $C(A)$ is nilpotent. 
Thus $x_1x_2\ldots x_n$ is a polynomial identity of $C(A)$ for some $n$.
Since by Proposition \ref{identities-of-C(A)} we have $Id(A) = Id(C(A))$, 
follows that $A$ is nilpotent.
\end{proof}

\begin{proof}(Corollary \ref{quotnilthenAnilp})
Let $B=\FX/Id(A)$. If $C(B)$ is nil, then by Theorem \ref{banachnilnilpotente} we have that $C(B)$ is nilpotent.
Thus $x_1x_2\ldots x_n$ is polynomial identity of $C(B)$ for some $n$.
Since by Theorem \ref{identities-in-banach} we have $Id(A) = Id(C(B))$, follows that $A$ is nilpotent.
\end{proof}

\end{document}